\newtheorem{theorem}{Theorem}[section]
\newtheorem{lemma}[theorem]{Lemma}
\theoremstyle{definition}
\newtheorem{example}[theorem]{Example}
\theoremstyle{remark}
\newtheorem{remark}[theorem]{Remark}
\newtheorem{problem}[theorem]{Problem}
\def\j{{\bm j}}   
\def\0{{\bm 0}}   
\begin{document}

\title{A family of balanced generalized weighing matrices}
\author{
 Hadi Kharaghani\thanks{Department of Mathematics and Computer Science, University of Lethbridge,
Lethbridge, Alberta, T1K 3M4, Canada. \texttt{kharaghani@uleth.ca}}
\and
Thomas Pender\thanks{Department of Mathematics and Computer Science, University of Lethbridge,
Lethbridge, Alberta, T1K 3M4, Canada. \texttt{Thomas.pender@uleth.ca}}
\and
  Sho Suda\thanks{Department of Mathematics,  National Defense Academy of Japan, Yokosuka, Kanagawa 239-8686, Japan. \texttt{ssuda@nda.ac.jp}}
}
\date{\today}

\maketitle

\begin{abstract}
Balanced weighing matrices with parameters $$
\left(1+18\cdot\frac{9^{m+1}-1}{8},9^{m+1},4\cdot 9^m\right),
$$
for each nonzero integer $m$ are constructed. This is the first infinite class not belonging to those with classical parameters.  
It is shown that any balanced  weighing matrix   is equivalent to a five-class association scheme.

\end{abstract}

\section{Introduction}

A \emph{weighing matrix} of order $v$ and weight $k$ is a $(0,\pm 1)$-matrix $W$ of order $v$ such that $WW^T = kI$. The special cases in which $k=v-1$ or $k=v$ yield the \emph{Conference} and \emph{Hadamard} matrices, respectively. For this paper, we are interested in those weighing matrices that are \emph{balanced}, that is, those weighing matrices which, upon setting each non-zero entry to unity, {yield} the incidence matrix of a symmetric $(v,k,\lambda)$ balanced incomplete block design. 

Balanced  weighing matrices with {\it classical parameters} include those with parameters $(v=\frac{q^{n+1}-1}{q-1},k=q^n,\lambda=q^{n}-q^{n-1}),$ for each positive integer $n$. Besides the balanced weighing matrices with these parameters, a known sporadic example is one with parameters $(19,9,4)$, constructed by Mathon and appeared first in \cite{ds83} and then it was shown in \cite{GM} to be the only balanced weighing matrix with the mentioned parameters. 

Symmetric designs are shown to have the structure of a three-class association scheme, completely determined by its parameters \cite[Theorem~1.6.1]{BCN}. One of the fundamental problems in association schemes is to find larger schemes whose quotient or fission  schemes coincide with the original one.

It is the purpose of this paper to extend this single sporadic case to an infinite class of balanced  weighing matrices and to show that any balanced weighing matrix corresponds to a five-class association scheme, where a three-class association scheme corresponding to a symmetric design is quotient of a fusion scheme of the five-class scheme, and vice versa.



\section{Preliminaries}
\subsection{BGWs over $\mathbb{Z}_n$}
Let $G$ be a multiplicatively written finite group, and let $W=(w_{ij})_{i,j=1}^v$ be a (0,G)-matrix. We say that $W$ is a \emph{balanced generalized weighing matrix} over $G$ with parameters $(v,k,\lambda)$, denoted BGW$(v,k,\lambda;G)$, if each row of $W$ contains exactly $k$ nonzero entries, and if the multiset $\{w_{ik}w_{jk}^{-1}\mid 1\leq k\leq v, w_{ik} \neq 0 \neq w_{j,k}\}$ contains exactly $\lambda/|G|$ copies of every element of $G$, for any distinct $i,j\in\{1,\ldots,v\}$. 

\begin{example}
A BGW$(v,k,\lambda;\{1,-1\})$ is a balanced weighing matrix of order $v$ and weight $k$.
\end{example}

\begin{example}
Every conference matrix of order $v$ is a BGW$(v,v-1,v-2;\{1,-1\})$. 
\end{example}

\begin{example}
It is known that (see \cite{GM})
for any prime power $q$, there is a BGW$(\frac{q^{m+1}-1}{q-1},q^m,q^{m-1}(q-1))$ over the cyclic group $G$ whose order divides $q-1$. This is true for each positive integer $m$, hence there is a balanced weighing matrix with these parameters for any such $q$.
\end{example}

To be concrete we will let our group be $G=\{1,w,\ldots,w^{n-1}\}$, the cyclic group of order $n$ defined by $w=e^{2\pi\sqrt{-1}/n}$, i.e. the complex $n^\text{th}$ roots of unity.  A balanced generalized weighing matrix over $G\cup\{0\}$ is called \emph{balanced Butson}.
Let $W$ be a balanced Butson weighing matrix over $G\cup\{0\}$ with parameters $(v,k,\lambda)$. 
Since $W$ is a matrix over $G\cup\{0\}$, there are $n$ disjoint (0,1)-matrices $\{W_i\}_{i=0}^{n-1}$ satisfying
\begin{align*}
W&=\sum_{i=0}^{n-1} w^i W_i.
\end{align*}
Moreover, we have the following necessary condition on $\{W_i\}_{i=0}^{n-1}$, namely,
\begin{align*}
\sum_{i,j=0}^{n-1} w_i w_j^{-1}W_i W_j^{\top}=\sum_{i,j=1}^n w_j^{-1}w_iW_j^{\top}W_i =k I_{v}+\frac{\lambda}{n}G(J_v-I_v), 
\end{align*}    
as matrices with entries of the group ring $\mathbb{C}[G]$, where $I_v$ is the identity matrix of order $v$ and $J_d$  denotes the $v\times v$ all-one matrix.  
Comparing entries with each element in $G$ yields the following lemma. 
\begin{lemma}\label{lem:bgw}
\begin{align*}
\sum_{i=0}^{n-1} W_i W_i^{\top}&=\sum_{i=0}^{n-1} W_i^{\top}W_i =k I_v+\frac{\lambda}{n}(J_v-I_v),\\
\sum_{i=0}^{n-1} W_i W_{i+j}^{\top}&=\sum_{i=0}^{n-1} W_{i+j}^{\top}W_i =\frac{\lambda}{n}(J_v-I_v), 
\end{align*}
where $j\in\{1,\ldots,n-1\}$ and the indices are taken modulo $n$. 
\end{lemma}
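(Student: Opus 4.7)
The plan is to read the lemma directly off the group ring identity displayed immediately before its statement, by comparing the coefficient of each group element $w^s$ on both sides in $M_v(\mathbb{C}[G])$. Since the powers $\{1,w,\ldots,w^{n-1}\}$ form a $\mathbb{C}$-basis of $\mathbb{C}[G]$, this coefficient comparison is unambiguous.

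First I would rewrite the left-hand side $\sum_{i,j=0}^{n-1} w^{i}w^{-j}\,W_iW_j^{\top}$ by collecting monomials of equal degree. Since $G$ is cyclic, $w^{i}w^{-j}=w^{i-j}$ with the exponent read modulo $n$, and reindexing by $s\equiv i-j\pmod{n}$ produces
$$\sum_{s=0}^{n-1} w^{s}\left(\sum_{i=0}^{n-1} W_iW_{i-s}^{\top}\right),$$
where all index arithmetic is modulo $n$. The right-hand side $kI_v+\frac{\lambda}{n}G(J_v-I_v)$ expands, using $G=\sum_{s=0}^{n-1}w^{s}$, so that the constant term $kI_v$ contributes only to the $w^{0}$ coefficient while $\frac{\lambda}{n}(J_v-I_v)$ is distributed uniformly across every power $w^{s}$.

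Equating the $w^{0}$ coefficients then yields $\sum_{i=0}^{n-1} W_iW_i^{\top}=kI_v+\frac{\lambda}{n}(J_v-I_v)$, which is the first claimed identity. Equating the $w^{s}$ coefficient for each $s\in\{1,\ldots,n-1\}$ yields $\sum_i W_iW_{i-s}^{\top}=\frac{\lambda}{n}(J_v-I_v)$; substituting $j\equiv -s\pmod{n}$, which also ranges over $\{1,\ldots,n-1\}$, gives the second claimed identity. Carrying out the same coefficient comparison on the middle expression $\sum_{i,j}w^{-j}w^{i}\,W_j^{\top}W_i$ of the displayed identity then delivers the companion statements involving $W_i^{\top}W_i$ and $W_{i+j}^{\top}W_i$.

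I do not anticipate any substantive obstacle here: the argument is careful bookkeeping of the cyclic reindexing, powered by the linear independence of $\{w^{s}\}_{s=0}^{n-1}$ over $\mathbb{C}$. The only place where one must be slightly attentive is verifying that under the substitution $j\equiv -s\pmod{n}$ the shift $W_{i-s}^{\top}$ becomes $W_{i+j}^{\top}$ exactly as stated.
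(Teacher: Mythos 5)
Your proposal is correct and is exactly the paper's argument: the authors state the group ring identity and then simply "compare entries with each element in $G$," which is the coefficient comparison in $M_v(\mathbb{C}[G])$ that you carry out explicitly (your cyclic reindexing $s\equiv i-j$ and the substitution $j\equiv -s$ are the only bookkeeping needed, and you do them correctly). The only point to keep straight in the write-up is that the linear independence is of the group elements as a basis of the group algebra $\mathbb{C}[G]$, not of the roots of unity as complex numbers, which you do state.
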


\subsection{Orthogonal arrays}


Let $S = \{1,2,\dots,q\}$ be some finite alphabet. An \emph{orthogonal array} of strength $t$ and index $\lambda$ is an $N \times k$ matrix over $S$ such that in every $N \times t$ subarray, each $t$-tuple in $S^t$ appears $\lambda$ times. We denote this property as OA$_\lambda(N,k,q,t)$. 

For $t=2e$, the following lower bound on $N$ was shown by Rao (see \cite[Theorem~2.1]{HSS}), namely, $N\geq \sum_{i=0}^e \binom{k}{i}(q-1)^i$. An orthogonal array with parameters $(N,k,q,2e)$ is said to be complete if the equality holds in above. 


\begin{lemma}\label{lem:oa}
Let $q$ be an odd prime power, and let $n \geq 2$ be an integer. 
There exists an orthogonal array $A$ with parameters $(q^n,\frac{q^n-1}{q-1},q,2)$ of index $q^{n-2}$ such that $A=\sum_{i=1}^{q} i A_i$, where $A_i$ ($i\in\{1,\ldots,q\}$) are disjoint $q^n \times \frac{q^n-1}{q-1}$ $(0,1)$-matrices satisfying
\begin{enumerate}
\item $\sum_{i=1}^{q} A_iA_i^\top=\frac{q^{n-1}-1}{q-1} J_{q^n}+q^{n-1} I_{q^n}$,
\item $\sum_{i,j=1,i\neq j}^{q}A_iA_j^\top=q^{n-1}(J_{q^n}-I_{q^n})$.
\end{enumerate}
\end{lemma}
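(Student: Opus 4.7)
My plan is to construct $A$ directly from the linear geometry of $\mathbb{F}_q^n$. I index the $q^n$ rows by vectors $v\in\mathbb{F}_q^n$ and the $\frac{q^n-1}{q-1}$ columns by the projective points $\ell$ of $PG(n-1,q)$. For each such $\ell$ I fix, once and for all, a nonzero representative $u_\ell\in\mathbb{F}_q^n$; I also fix an arbitrary bijection $\sigma:\mathbb{F}_q\to\{1,\ldots,q\}$ and define the $(v,\ell)$-entry of $A$ to be $\sigma(\langle v,u_\ell\rangle)$. The matrices $A_i$ are then forced on me: $A_i$ is the indicator matrix of the entries of $A$ equal to $i$, and by construction these are disjoint $(0,1)$-matrices with $A=\sum_{i=1}^{q}iA_i$.

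Next I would verify the orthogonal-array property. For two distinct columns $\ell_1\neq\ell_2$, the vectors $u_{\ell_1},u_{\ell_2}$ are linearly independent, so $v\mapsto(\langle v,u_{\ell_1}\rangle,\langle v,u_{\ell_2}\rangle)$ is a surjective linear map $\mathbb{F}_q^n\to\mathbb{F}_q^2$ with kernel of dimension $n-2$. Each ordered pair in $\{1,\ldots,q\}^2$ therefore occurs in exactly $q^{n-2}$ rows of the two-column subarray, giving the required OA$_{q^{n-2}}(q^n,\frac{q^n-1}{q-1},q,2)$.

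For conditions (i) and (ii) I read off entries directly. The $(v,w)$-entry of $\sum_{i}A_iA_i^{\top}$ counts the columns $\ell$ with $A_{v,\ell}=A_{w,\ell}$, equivalently those $\ell$ with $\langle v-w,u_\ell\rangle=0$. If $v=w$, all $\frac{q^n-1}{q-1}$ columns contribute; if $v\neq w$, the acceptable $u_\ell$ are the nonzero vectors of the hyperplane $(v-w)^{\perp}$ taken modulo scalar, which is $\frac{q^{n-1}-1}{q-1}$ of them. This matches $\frac{q^{n-1}-1}{q-1}J_{q^n}+q^{n-1}I_{q^n}$, proving (i). Condition (ii) then follows from the identity $\bigl(\sum_{i}A_i\bigr)\bigl(\sum_{j}A_j^{\top}\bigr)=\frac{q^n-1}{q-1}J_{q^n}$ by subtracting (i).

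Nothing in this argument is genuinely difficult, but the point to keep in mind is that both identities (i) and (ii) depend on the entries of $A$ only through whether two entries in the same column agree, so they are insensitive to the choices of the representatives $u_\ell$ and of the relabelling $\sigma$. In other words, the Rao-optimal OA obtained from $PG(n-1,q)$ already carries the requisite fine structure, once it is written in the inner-product form above.
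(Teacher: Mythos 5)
Your proof is correct, but it takes a different route from the paper for the key point. The paper does not construct the array: it quotes the existence of an OA$(q^n,\frac{q^n-1}{q-1},q,2)$ from Hedayat--Sloane--Stufken, observes that it meets the Rao bound, and then invokes Delsarte's theorem that a complete orthogonal array of strength $2$ has a single Hamming distance $d$ between distinct rows, computing $d=\frac{q^{n-1}-1}{q-1}$ from the Krawtchouk identity $K_{k,q,0}(d)+K_{k,q,1}(d)=0$; property (i) is then the translation ``number of agreements $=\frac{q^n-1}{q-1}-d$,'' and (ii) follows by subtracting from $\bigl(\sum_i A_i\bigr)\bigl(\sum_j A_j^\top\bigr)=\frac{q^n-1}{q-1}J$ exactly as you do. You instead build the array explicitly from $PG(n-1,q)$, with entries $\sigma(\langle v,u_\ell\rangle)$, verify strength $2$ and index $q^{n-2}$ by the surjectivity of $v\mapsto(\langle v,u_{\ell_1}\rangle,\langle v,u_{\ell_2}\rangle)$, and obtain the agreement count in (i) directly as the number of projective points of the hyperplane $(v-w)^{\perp}$. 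Your argument is self-contained and elementary (it bypasses Delsarte's theorem and the Krawtchouk computation entirely, and incidentally shows that oddness of $q$ is irrelevant here), whereas the paper's argument is shorter on the page and more general in spirit, since it derives the constant-distance property from completeness alone and so applies to any Rao-optimal strength-$2$ array, not just the geometric one. Both are valid; your explicit inner-product model is essentially the standard construction underlying the cited existence theorem, so the two proofs describe the same object from different ends.
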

\begin{proof}
It is shown (see \cite[Theorem~3.20]{HSS}) that for $q$ an odd prime power and a positive integer $n\geq 2$, there exists an orthogonal array $A$ with parameters $(q^n,\frac{q^n-1}{q-1},q,2)$. 
Note that $A$ is complete.   
Then by \cite[Theorem~5.21]{D}, the Hamming distances between distinct rows take only one value, say $d$. 
This value is uniquely determined as follows.  
Let $K_{k,q,i}(x)$ be the Krawtchouk polynomial of degree $i$ defined as
$
K_{k,q,i}(x)=\sum_{j=0}^i(-1)^j (q-1)^{i-j}\binom{x}{j}\binom{k-x}{i-j}.
$ 
Then the Hamming distance $d$ satisfies 
$$
K_{\frac{q^n-1}{q-1},q,0}(d)+K_{\frac{q^n-1}{q-1},q,1}(d)=0,
$$
which shows that $d=\frac{q^{n-1}-1}{q-1}$. 
\\
(i): Let $x,y\in\{1,\ldots,q^n\}$. Write the $x$-th row of $A$ as $a_x$.  Then 
\begin{align*}
(\sum_{i=1}^q A_i A_i^\top)_{x,y}&=\sum_{i=1}^q (A_i A_i^\top)_{x,y}\\\displaybreak[0]
&=\sum_{i=1}^q \sum_{\ell=1}^{\frac{q^n-1}{q-1}}(A_i)_{x,\ell}(A_i)_{y,\ell}\\\displaybreak[0]
&=\sum_{i=1}^q |\{\ell\in\{1,\ldots,\frac{q^n-1}{q-1}\} \mid A_{x,\ell}=A_{y,\ell}=i\}|\\
&= |\{\ell\in\{1,\ldots,\frac{q^n-1}{q-1}\} \mid A_{x,\ell}=A_{y,\ell}\}|\\
&=\frac{q^n-1}{q-1}-d(a_x,a_y)
\end{align*}
which shows that $\sum_{i=1}^q A_i A_i^\top=\frac{q^{n-1}-1}{q-1} J_{q^n}+q^{n-1} I_{q^n}$. \\
(ii): Since $\sum_{i=1}^q A_i=J_{q^n,\frac{q^n-1}{q-1}}$, we have $\sum_{i,j=1}^q A_i A_j^\top=(\sum_{i=1}^q A_i)(\sum_{j=1}^q A_j^\top)=J_{q^n,\frac{q^n-1}{q-1}}J_{\frac{q^n-1}{q-1},q^n}=\frac{q^n-1}{q-1} J_{q^n}$. This with (i) shows  (ii). 
\end{proof}

\section{Construction}
Consider the following matrices,
\[
\arraycolsep=1.0pt\def\arraystretch{1.0}
U=
\left[\begin{array}{cccccccccccccccccc}
0&0&0&0&0&0&0&0&0&1&1&1&1&1&1&1&1&1\\
-&0&0&1&0&1&1&1&0&0&0&0&1&0&-&1&-&0\\
0&-&0&1&1&0&0&1&1&0&0&0&-&1&0&0&1&-\\
0&0&-&0&1&1&1&0&1&0&0&0&0&-&1&-&0&1\\
1&1&0&-&0&0&1&0&1&1&-&0&0&0&0&1&0&-\\
0&1&1&0&-&0&1&1&0&0&1&-&0&0&0&-&1&0\\
1&0&1&0&0&-&0&1&1&-&0&1&0&0&0&0&-&1\\
1&0&1&1&1&0&-&0&0&1&0&-&1&-&0&0&0&0\\
1&1&0&0&1&1&0&-&0&-&1&0&0&1&-&0&0&0\\
0&1&1&1&0&1&0&0&-&0&-&1&-&0&1&0&0&0\\
\end{array}
\right]
\]
{and}
\[
\arraycolsep=1.0pt\def\arraystretch{1.0}
V=\left[
\begin{array}{cccccccccccccccccc}
1&1&1&1&1&1&1&1&1&0&0&0&0&0&0&0&0&0\\
0&-&-&0&1&0&0&0&1&-&1&1&0&1&0&0&0&1\\
-&0&-&0&0&1&1&0&0&1&-&1&0&0&1&1&0&0\\
-&-&0&1&0&0&0&1&0&1&1&-&1&0&0&0&1&0\\
0&0&1&0&-&-&0&1&0&0&0&1&-&1&1&0&1&0\\
1&0&0&-&0&-&0&0&1&1&0&0&1&-&1&0&0&1\\
0&1&0&-&-&0&1&0&0&0&1&0&1&1&-&1&0&0\\
0&1&0&0&0&1&0&-&-&0&1&0&0&0&1&-&1&1\\
0&0&1&1&0&0&-&0&-&0&0&1&1&0&0&1&-&1\\
1&0&0&0&1&0&-&-&0&1&0&0&0&1&0&1&1&-\\
\end{array}
\right],
\]
where $-$ stands for $-1$.  
 Both matrices $U$ and $V$ are signed residual designs of a symmetric $BIBD(19,9,4)$. Moreover, if $\bar{Z}$ is the matrix obtained from the matrix $Z$ by taking the absolute value of each entry, then it can be seen that $\bar{U}$ and $\bar{V}$ are complementary incidence matrices.

The derived design associated with both $\bar{U}$ and $\bar{V}$ can also be signed as well, and it is the matrix $Y$  below.
 \[
\arraycolsep=1.0pt\def\arraystretch{1.0}
Y=\left[
\begin{array}{cccccccccccccccccc}
 {0} & 0 & 0 & 1 & 0 & - & 1 & - & 0 & 0 & - & - & 0 & 1 & 0 & 0 & 0 & 1 \\
 0 & 0 & 0 & - & 1 & 0 & 0 & 1 & - & - & 0 & - & 0 & 0 & 1 & 1 & 0 & 0 \\
 0 & 0 & 0 & 0 & - & 1 & - & 0 & 1 & - & - & 0 & 1 & 0 & 0 & 0 & 1 & 0 \\
 1 & - & 0 & 0 & 0 & 0 & 1 & 0 & - & 0 & 0 & 1 & 0 & - & - & 0 & 1 & 0 \\
 0 & 1 & - & 0 & 0 & 0 & - & 1 & 0 & 1 & 0 & 0 & - & 0 & - & 0 & 0 & 1 \\
 - & 0 & 1 & 0 & 0 & 0 & 0 & - & 1 & 0 & 1 & 0 & - & - & 0 & 1 & 0 & 0 \\
 1 & 0 & - & 1 & - & 0 & 0 & 0 & 0 & 0 & 1 & 0 & 0 & 0 & 1 & 0 & - & - \\
 - & 1 & 0 & 0 & 1 & - & 0 & 0 & 0 & 0 & 0 & 1 & 1 & 0 & 0 & - & 0 & - \\
 0 & - & 1 & - & 0 & 1 & 0 & 0 & 0 & 1 & 0 & 0 & 0 & 1 & 0 & - & - & 0 
\end{array}
\right]
\]
Each of $U$ and $Y$, and $V$ and $Y$ can be used in turn to give two BGW$(19,9,4;\{1,-1\})$ matrices.\\

The fact that these signings work together in this way is remarkable, and for the interested reader, we pose the following question: 
\begin{problem}
 Is there a BGW$(2q^2+1, q^2, (q^2-1)/2; \{1,-1\})$, for every odd prime power $q$?
 \end{problem}
The method used in this paper can be extended to any BGW$(2q^2+1, q^2, (q^2-1)/2; \{1,-1\})$ satisfying properties similar to the BGW$(19,9,4;\{1,-1\})$ matrix above, for every odd prime power $q$.''

The matrices $U,V,$ and $Y$  satisfy the following fundamental properties.   

\begin{lemma}\label{lem:pqy}
\begin{enumerate}
\item $UU^\top=V^\top=9I_{10}$, $UV^\top=VU^\top$.
\item $YY^\top =9I_9-J_9$.
\item $UY^\top=VY^\top=0$. 
\item $\overline{U}\cdot\overline{U}^\top=\overline{V}\cdot\overline{V}^\top=5I_{10}+4J_{10}$, $\overline{U}\cdot\overline{V}^\top=\overline{V}\cdot\overline{U}^\top=-5I_{10}+5J_{10}$.
\item $\overline{Y}\cdot\overline{Y}^\top =5I_9+3J_9$, 
$\overline{U}\cdot\overline{Y}^\top=\overline{V}\cdot\overline{Y}^\top=4J_{10}$. 
\end{enumerate}
\end{lemma}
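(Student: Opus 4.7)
The plan is to separate the five claims into two groups. Parts (iv)--(v) concern only the $(0,1)$-matrices $\bar{U}, \bar{V}, \bar{Y}$ and are direct restatements of the incidence properties of the residual and derived designs that these matrices encode; they follow from standard $2$-design identities. Parts (i)--(iii) are the genuinely new content: they assert that the displayed signings satisfy orthogonality identities that one would not predict from design theory alone, and they are the main obstacle.

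For (iv)--(v), I would begin by observing that $\bar{U}$ is a $10\times 18$ incidence matrix of the residual $2$-$(10,5,4)$ design associated with a fixed block $B$ of the underlying symmetric $(19,9,4)$ design, with rows indexed by the $10$ points not in $B$ and columns by the $18$ blocks different from $B$. The replication number of this design is $r=9$, so $\bar{U}\bar{U}^\top=(r-\lambda)I+\lambda J=5I_{10}+4J_{10}$. Since $\bar{V}=J_{10,18}-\bar{U}$ is the complementary incidence matrix, expanding $(J-\bar{U})(J-\bar{U})^\top$ and using the row sum $9$ of $\bar{U}$ yields $\bar{V}\bar{V}^\top=5I_{10}+4J_{10}$ and $\bar{U}\bar{V}^\top=\bar{V}\bar{U}^\top=-5I_{10}+5J_{10}$. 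The matrix $\bar{Y}$ is the incidence matrix of the derived $2$-$(9,4,3)$ design on the $9$ points of $B$, giving $\bar{Y}\bar{Y}^\top=5I_9+3J_9$. For the cross product, fix $p\notin B$ and $q\in B$; the entry $(\bar{U}\bar{Y}^\top)_{p,q}$ counts the blocks distinct from $B$ containing both $p$ and $q$. Since $p$ and $q$ share exactly $\lambda=4$ blocks in the full design and none of those can be $B$ (as $p\notin B$), every entry equals $4$, and the same argument works for $\bar{V}\bar{Y}^\top$.

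The crux of the lemma is (i)--(iii). Here the counting arguments above fix the absolute values of the entries of $UU^\top$, $VV^\top$, $YY^\top$, $UV^\top$, and $UY^\top$, but leave the signs entirely open; the lemma asserts specific and remarkable cancellations in the signed sums. I see no structural shortcut: the identities must be verified by direct computation on the explicit matrices $U,V,Y$ displayed above. Fortunately each matrix is small and sparse (nine nonzero entries per row in $U$ and $V$, eight in $Y$), so the identities can be confirmed entry by entry by hand, or instantly by computer algebra. I would verify $UU^\top=9I_{10}$ and $VV^\top=9I_{10}$ first, then $YY^\top=9I_9-J_9$, then the orthogonality $UY^\top=VY^\top=0$, and finally the symmetry $UV^\top=VU^\top$, which is the only identity that is not itself a ``standard'' weighing-matrix relation. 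Once (i)--(iii) are in place, property (iii) is precisely the mechanism that allows the vertical concatenations of $U$ (respectively $V$) with $Y$ to extend to the two BGW$(19,9,4;\{1,-1\})$ matrices announced immediately before the lemma.
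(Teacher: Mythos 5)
Your proposal is correct, and for the signed identities (i)--(iii) it coincides with the paper, whose entire proof is ``a straightforward calculation'' on the explicit matrices $U,V,Y$. Where you differ is in (iv)--(v): instead of computing, you derive them from design theory, using that $\bar{U}$ is the incidence matrix of the residual $2$-$(10,5,4)$ design (with $r=9$, giving $(r-\lambda)I+\lambda J=5I_{10}+4J_{10}$), that $\bar{V}=J_{10,18}-\bar{U}$, that $\bar{Y}$ is the derived $2$-$(9,4,3)$ design, and a two-point block count for the cross products $\bar{U}\bar{Y}^\top=\bar{V}\bar{Y}^\top=4J$; all of these computations check out. This buys a structural explanation of why the unsigned identities hold, at the cost of leaning on the assertions in the text preceding the lemma (that $\bar{U},\bar{V}$ are complementary signed residual designs and $\bar{Y}$ the associated derived design, with the same labelling of the $18$ blocks across the three matrices), which are themselves facts about the displayed matrices that ultimately require the same kind of entry-level verification the paper performs; so in rigor the two routes are equivalent, but yours makes the provenance of the constants $5,4,3$ transparent, while the signed cancellations in (i)--(iii) remain, as you say, a matter of direct verification with no structural shortcut.
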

\begin{proof}
A straightforward calculation.
\end{proof}

Let $W$ be a BGW$(\frac{9^{m+1}-1}{8},9^m,9^m-9^{m-1};\mathbb{Z}_4)$ over $\mathbb{Z}_4$ and define the \\$10\frac{9^{m+1}-1}{8}\times 18\frac{9^{m+1}-1}{8}$ matrix $R$ by 
$$
R=W_0\otimes U+W_1\otimes (-V)+W_2\otimes (-U)+W_3\otimes V.
$$

Let $A$ be the OA$(9^{m+1},\frac{9^{m+1}-1}{8},9,2)$ of index $9^{m-1}$ 
 over $\{1,\ldots,9\}$. Write $A=\sum_{i=1}^9 iA_i$, where the $A_i$'s are disjoint $(0,1)$-matrices.
We then define the $9^{m+1} \times 18\frac{9^{m+1}-1}{8}$ matrix $D$ by $D=\sum_{i=1}^9 A_i\otimes r_i$.
\begin{lemma}
\begin{enumerate}
\item $RR^\top=9^{m+1} I_{\frac{10(9^{m+1}-1)}{8}}$.
\item $DD^\top=9^{m+1} I_{9^{m+1}}-J_{9^{m+1}}$.
\item $RD^\top=DR^\top=0$. 
\item $\overline{R}\cdot \overline{R}^\top=5\cdot 9^m I_{\frac{10(9^{m+1}-1)}{8}}+4\cdot 9^mJ_{\frac{10(9^{m+1}-1)}{8}}$.
\item $\overline{D}\cdot\overline{D}^\top=5\cdot 9^m I_{9^{m+1}}+(4\cdot 9^{m} -1)J_{9^{m+1}}$.
\item $\overline{R}\cdot\overline{D}^\top=\overline{D}\cdot\overline{R}^\top=4\cdot 9^{m+1}J_{\frac{10(9^{m+1}-1)}{8},9^{m+1}}$, where $J_{a,b}$ denotes the $a\times b$ all-one matrix.
\end{enumerate}
\end{lemma}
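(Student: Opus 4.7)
The plan is to verify each of the six identities by writing $R$ and $D$ in tensor-product form and expanding each Gram product bilinearly. I set $A = W_0 - W_2$, $B = W_1 - W_3$, $C = W_0 + W_2$, and $E = W_1 + W_3$. Then $R = A \otimes U - B \otimes V$, $\bar R = C \otimes \bar U + E \otimes \bar V$, and (writing $y_i$ for the $i$-th row of $Y$, so that $r_i = y_i$) $D = \sum_{i=1}^9 A_i \otimes y_i$, $\bar D = \sum_i A_i \otimes \bar y_i$.

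Parts (ii)--(vi) should reduce to mechanical expansions. For (ii) and (v), $DD^\top$ and $\bar D\bar D^\top$ become scalar linear combinations $\sum_{i,j}(YY^\top)_{ij}\,A_iA_j^\top$ and $\sum_{i,j}(\bar Y\bar Y^\top)_{ij}\,A_iA_j^\top$; substituting Lemma~3.1(ii), (v) and applying Lemma~2.7(i) together with $\sum_i A_i = J$ will yield the stated values. For (iii), Lemma~3.1(iii) gives $UY^\top = VY^\top = 0$, so every cross-product in $RD^\top$ vanishes. For (vi), Lemma~3.1(v) gives $\bar U\bar y_j^\top = \bar V\bar y_j^\top = 4\mathbf{1}_{10}$, and the expansion should collapse to $4(C+E)J_{v,9^{m+1}} \otimes \mathbf{1}_{10}$, with support matrix $S = C+E$ having constant row sum $k = 9^m$. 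For (iv), Lemma~3.1(iv) gives $\bar R\bar R^\top = (CC^\top+EE^\top)\otimes(5I_{10}+4J_{10}) + (CE^\top+EC^\top)\otimes(-5I_{10}+5J_{10})$; I will evaluate the outer sums via Lemma~2.4 (namely $CC^\top+EE^\top = \sum_i W_iW_i^\top + \sum_i W_iW_{i+2}^\top$ and $CE^\top+EC^\top = SS^\top - (CC^\top+EE^\top)$) and combine the $I\otimes I$, $I\otimes J$, $J\otimes I$, $J\otimes J$ pieces to arrive at $5\cdot 9^m I + 4\cdot 9^m J$.

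The main obstacle is (i). Bilinear expansion will give
\[
RR^\top = 9(AA^\top+BB^\top)\otimes I_{10} - (AB^\top+BA^\top)\otimes UV^\top.
\]
The Hermitian BGW relation $WW^* = kI$ (equivalently $\sum_i W_iW_i^\top - \sum_i W_iW_{i+2}^\top = 9^m I$ from Lemma~2.4) yields $AA^\top+BB^\top = 9^m I$ (real part) and $AB^\top = BA^\top$ (imaginary part), so the first summand equals $9^{m+1} I$. The hard part will be showing that $AB^\top+BA^\top = 0$; since $UV^\top \neq 0$, this cannot be avoided, yet it amounts to the imaginary part of $WW^\top$ rather than $WW^*$ and is not forced by the BGW axioms alone. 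The argument therefore must invoke the specific Singer--cyclic, Hermitian structure of the BGW $W$ from \cite{GM}: then $A$ is a symmetric circulant and $B$ is an antisymmetric circulant, so $AB^\top + BA^\top = -AB + BA = [B,A] = 0$ since circulants commute.
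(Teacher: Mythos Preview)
Your treatment of parts (ii)--(vi) is exactly the paper's: both arguments expand the tensor products bilinearly and feed in Lemma~2.5 (orthogonal-array identities), Lemma~3.1 (the $U,V,Y$ identities), and the symmetric--design row sums of $S=\sum_\ell W_\ell$. Your repackaging via $A,B,C,E$ is cosmetic there.

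For part (i) the paper also expands bilinearly, but then asserts that the $UV^\top$-coefficient collapses to
\[
-\Bigl(\sum_{i}W_iW_{i+1}^\top-\sum_iW_iW_{i+3}^\top\Bigr),
\]
which it kills with Lemma~2.4. You do \emph{not} make that step, and you are right not to: if one actually collects the sixteen terms (using $UV^\top=VU^\top$) the coefficient is
\[
-(AB^\top+BA^\top)=-(W_0W_1^\top+W_1W_0^\top+W_2W_3^\top+W_3W_2^\top)+(W_0W_3^\top+W_3W_0^\top+W_1W_2^\top+W_2W_1^\top),
\]
which is an \emph{alternating} combination in each shift class and is not covered by Lemma~2.4. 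Since $\overline U\,\overline V^\top$ has off-diagonal entries equal to $5$, the off-diagonal entries of $UV^\top$ are sums of five signs and hence nonzero, so the cross term cannot be discarded for free. In other words, the paper's displayed simplification is in error, and your diagnosis that one needs $AB^\top+BA^\top=0$---equivalently the vanishing of the imaginary part of $WW^\top$, not of $WW^\ast$---is correct. Your proposed remedy, to take $W$ from the classical Singer construction so that it is a Hermitian circulant (making $A$ a symmetric circulant and $B$ a skew-symmetric circulant, hence $AB^\top+BA^\top=-AB+BA=0$), genuinely repairs the argument. What this buys over the paper's version is an honest proof of (i); the price is that $W$ can no longer be an arbitrary BGW over $\mathbb{Z}_4$, which the paper's statement tacitly allows.
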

\begin{proof}
(i): By Lemma~\ref{lem:bgw} and Lemma~\ref{lem:pqy} (i),  
\begin{align*}
RR^\top
&=9W_0W_0^\top \otimes I_{10}-W_0W_1^\top\otimes UV^\top-9W_0W_2^\top\otimes I_{10}+W_0W_3^\top\otimes UV^\top\\
&-W_1W_0^\top \otimes VU^\top+9W_1W_1^\top\otimes I_{10}+W_1W_2^\top\otimes VU^\top-9W_1W_3^\top\otimes I_{10}\\
&-9W_2W_0^\top \otimes I_{10}+W_2W_1^\top\otimes UV^\top+9W_2W_2^\top\otimes I_{10}-W_2W_3^\top\otimes UV^\top\\
&+W_3W_0^\top \otimes VU^\top-9W_3W_1^\top\otimes I_{10}-W_3W_2^\top\otimes VU^\top+9W_3W_3^\top\otimes I_{10}\\
&=9(\sum_{i=0}^3W_iW_i^\top-\sum_{i=0}^3W_iW_{i+2}^\top)\otimes I_{10}
-(\sum_{i=0}^3W_iW_{i+1}^\top -\sum_{i=0}^3W_iW_{i+3}^\top)\otimes UV^\top\\
&=9^{m+1} I_{\frac{10(9^{m+1}-1)}{8}}.
\end{align*}

(ii): By Lemma~\ref{lem:oa} and Lemma~\ref{lem:pqy} (ii),  
\begin{align*}
DD^\top&=\sum_{i,j=1}^{9} A_i A_j^\top \otimes r_i r_j^\top\\\displaybreak[0]
&=\sum_{i=1}^{9} A_i A_i^\top \otimes r_i r_i^\top+\sum_{i\neq j} A_i A_j^\top \otimes r_i r_j^\top\\\displaybreak[0]
&=8\sum_{i=1}^{9} A_i A_i^\top -\sum_{i\neq j} A_i A_j^\top \\\displaybreak[0]
&=(9^{m}-1) J_{9^{m+1}}+8\cdot 9^{m} I_{9^{m+1}}- 9^{m}(J_{9^{m+1}}-I_{9^{m+1}})\\\displaybreak[0]
&=9^{m+1} I_{9^{m+1}}-J_{9^{m+1}}. 
\end{align*}

(iii): By Lemma~\ref{lem:pqy} (iii),  
\begin{align*}
RD^\top&=(W_0\otimes U+W_1\otimes (-V)+W_2\otimes (-U)+W_3\otimes V)(\sum_{i=1}^9 A_i^\top\otimes r_i^\top)\\
&=\sum_{i=1}^9(W_0A_i^\top\otimes Ur_i^\top+ W_1A_i^\top\otimes (-V)r_i^\top+ W_2A_i^\top\otimes (-U)r_i^\top+ W_3A_i^\top\otimes Vr_i^\top)\\
&=0. 
\end{align*}

(iv): Note that $\overline{R}=W_0\otimes \overline{U}+W_1\otimes \overline{V}+W_2\otimes \overline{U}+W_3\otimes \overline{V}$. The claim is proved in a similar fashion as (i) by Lemma~\ref{lem:bgw} and Lemma~\ref{lem:pqy} (iv).   

(v): Note that $\overline{A}=\sum_{i=1}^9 A_i\otimes \overline{r_i}$. The claim is proved in a similar fashion as (ii) by Lemma~\ref{lem:bgw} and Lemma~\ref{lem:pqy} (v).  

(vi): The claim is proved in a similar fashion as (iii), but we include its proof. 
By Lemma~\ref{lem:pqy} (vi), the fact that $\sum_{i=1}^9A_i=J_{9^{m+1},\frac{9^{m+1}-1}{8}}$ and that $\sum_{i=0}^3 W_i$ is a symmetric design with parameters $(\frac{9^{m+1}-1}{8},9^m,9^m-9^{m-1})$,
\begin{align*}
\overline{R}\cdot\overline{D}^\top&=(W_0\otimes \overline{U}+W_1\otimes \overline{V}+W_2\otimes \overline{U}+W_3\otimes \overline{V})(\sum_{i=1}^9 A_i^\top\otimes \overline{r_i}^\top)\\\displaybreak[0]
&=\sum_{i=1}^9(W_0A_i^\top\otimes \overline{U}\overline{r_i}^\top+ W_1A_i^\top\otimes \overline{V}\overline{r_i}^\top+ W_2A_i^\top\otimes \overline{U}\overline{r_i}^\top+ W_3A_i^\top\otimes \overline{V}\overline{r_i}^\top)\\
&=4\sum_{i=1}^9(W_0A_i^\top+ W_1A_i^\top+ W_2A_i^\top+ W_3A_i^\top)\\
&=4(W_0+W_1+W_2+W_3)(\sum_{i=1}^9A_i^\top)\\
&=4(W_0+W_1+W_2+W_3)J_{\frac{9^{m+1}-1}{8},9^{m+1}}\\
&=4\cdot 9^m J_{\frac{9^{m+1}-1}{8},9^{m+1}}. \qedhere
\end{align*}
\end{proof}
    
We claim that $X=\begin{bmatrix} \j & D \\ \0 & R \end{bmatrix}$, where $\j,\0$ are the all-one and zero column  vectors of appropriate order respectively, is a BGW$(18\frac{9^{m+1}-1}{8}+1,9^{m+1},4\cdot 9^{m};\mathbb{Z}_2)$. 
Indeed, 
\begin{align*}
XX^\top&=\begin{bmatrix} \j & D \\ \0 & R \end{bmatrix}\begin{bmatrix} \j^\top & \0^\top \\ D^\top & R^\top \end{bmatrix}=\begin{bmatrix} \j \j^\top+DD^\top & D R^\top \\ RD^\top & RR^\top \\
\end{bmatrix}\\
&=\begin{bmatrix} J_{9^{m+1}}+(9^{m+1} I_{9^{m+1}}-J_{9^{m+1}}) & 0 \\ 0 & 9^{m+1} I_{\frac{10(9^{m+1}-1)}{8}} \\
\end{bmatrix}\\
&=9^{m+1}I_{18\frac{9^{m+1}-1}{8}+1},
\end{align*}
and 
\begin{align*}
\overline{X}\cdot \overline{X}^\top&=\begin{bmatrix} \j & \overline{D} \\ \0 & \overline{R} \end{bmatrix}\begin{bmatrix} \j^\top & \0^\top \\ \overline{D}^\top & \overline{R}^\top \end{bmatrix}=\begin{bmatrix} \j \j^\top+\overline{D}\cdot\overline{D}^\top & \overline{D}\cdot\overline{R}^\top \\ \overline{R}\cdot\overline{D}^\top & \overline{R}\cdot\overline{R}^\top \\
\end{bmatrix}\\
&=\begin{bmatrix} J_{9^{m+1}}+(5\cdot 9^m I_{9^{m+1}}+(4\cdot 9^{m} -1)J_{9^{m+1}}) & 4\cdot 9^{m+1}J_{\frac{10(9^{m+1}-1)}{8},9^{m+1}} \\ 4\cdot 9^{m+1}J_{9^{m+1},\frac{10(9^{m+1}-1)}{8}} & 5\cdot 9^m I_{\frac{10(9^{m+1}-1)}{8}}+4\cdot 9^mJ_{\frac{10(9^{m+1}-1)}{8}} \\
\end{bmatrix}\\
&=5\cdot 9^{m}I_{18\frac{9^{m+1}-1}{8}+1}+4\cdot 9^{m}J_{18\frac{9^{m+1}-1}{8}+1}. 
\end{align*}

Thus we have shown the following theorem. 
\begin{theorem}
Let $m$ be any positive integer. There exists a balanced weighing matrix with parameters 
$$
\left(1+18\cdot\frac{9^{m+1}-1}{8},9^{m+1},4\cdot 9^m\right).
$$
\end{theorem}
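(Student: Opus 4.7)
The plan is to take $X = \begin{bmatrix} \j & D \\ \0 & R \end{bmatrix}$ as the candidate balanced weighing matrix and verify the two defining identities directly, using the preceding lemma on $R$ and $D$ as a black box. First I would confirm that $X$ is a $(0,\pm 1)$-matrix of the correct order: its entries come from the tensor products defining $R$ and $D$, which are built from the $\{0,\pm 1\}$-matrices $U, V, Y$ tensored against $(0,1)$-matrices arising from the BGW over $\mathbb{Z}_4$ and the orthogonal array decomposition of Lemma~\ref{lem:oa}; and its order is $1 + 10\cdot\tfrac{9^{m+1}-1}{8} + 9^{m+1} = 1 + 18\cdot\tfrac{9^{m+1}-1}{8}$, matching the required $v$.

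Next I would compute $XX^\top$ block by block. The $(1,1)$ block collapses to $\j\j^\top + DD^\top = J + (9^{m+1}I - J) = 9^{m+1}I$ by part (ii) of the lemma, the off-diagonal blocks vanish by part (iii), and the $(2,2)$ block equals $RR^\top = 9^{m+1}I$ by part (i). Assembling the pieces yields $XX^\top = 9^{m+1}I_v$, which is exactly the weighing matrix condition with weight $k = 9^{m+1}$.

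For the balance condition I would repeat the block computation for $\overline{X}\cdot\overline{X}^\top$ and check it equals $kI + \lambda(J - I) = 5\cdot 9^m I + 4\cdot 9^m J$ with $\lambda = 4\cdot 9^m$. The $(1,1)$ block reduces to $J + \overline{D}\cdot\overline{D}^\top = 5\cdot 9^m I + 4\cdot 9^m J$ by (v); the off-diagonal blocks are $\overline{R}\cdot\overline{D}^\top = 4\cdot 9^m J$ by (vi); and the $(2,2)$ block is $\overline{R}\cdot\overline{R}^\top = 5\cdot 9^m I + 4\cdot 9^m J$ by (iv). Setting every nonzero entry of $X$ to $1$ therefore produces the incidence matrix of a symmetric $(v, 9^{m+1}, 4\cdot 9^m)$-BIBD, which is precisely the balance requirement and completes the verification.

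The hard part is not this final assembly, which amounts to one block matrix multiplication per identity; it is really the construction of $R$ and $D$ together with the preceding six-part lemma about them. That lemma in turn rests on the cross-relations of $U, V, Y$ recorded in Lemma~\ref{lem:pqy}, the expansion identities for BGWs over $\mathbb{Z}_4$ in Lemma~\ref{lem:bgw}, and the complete-orthogonal-array decomposition of Lemma~\ref{lem:oa}. Once those ingredients have been assembled into $R$ and $D$, the theorem follows by bookkeeping.
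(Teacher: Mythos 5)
Your proposal is correct and is essentially the paper's own proof: the paper likewise takes $X=\begin{bmatrix} \j & D \\ \0 & R \end{bmatrix}$ and verifies $XX^\top=9^{m+1}I$ and $\overline{X}\cdot\overline{X}^\top=5\cdot 9^m I+4\cdot 9^m J$ block by block from the six-part lemma on $R$ and $D$. Two tiny bookkeeping points: the order is $9^{m+1}+10\cdot\frac{9^{m+1}-1}{8}=1+18\cdot\frac{9^{m+1}-1}{8}$ (your expression $1+10\cdot\frac{9^{m+1}-1}{8}+9^{m+1}$ double-counts by one), and the value $4\cdot 9^m J$ you use for $\overline{R}\cdot\overline{D}^\top$ is the correct one, consistent with the lemma's proof and the final assembly, even though the lemma's statement (vi) misprints it as $4\cdot 9^{m+1}$.
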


\begin{remark}
There has been a deliberate use of balanced Butson matrices instead of the general balanced weighing matrices in the proof. The presented new method makes minimal use of balancedness and may help with other similar constructions. 
\end{remark}

\section{Balanced  weighing matrices   and association schemes}
In this section we show an equivalence between BGW matrices over $\{1,-1\}$ and some association schemes. 

Let $n$ be a positive integer. 
Let $X$ be a finite set and $R_i$ ($i\in\{0,1,\ldots,n\}$) be a nonempty subset of $X\times X$. 
The \emph{adjacency matrix} $A_i$ of the graph with vertex set $X$ and edge set $R_i$ is a $(0,1)$-matrix indexed by $X$ such that $(A_i)_{xy}=1$ if $(x,y)\in R_i$ and $(A_i)_{xy}=0$ otherwise. 
A \emph{{\rm(}symmetric{\rm)} association scheme} with $n$ classes is a pair $(X,\{R_i\}_{i=0}^n)$ satisfying the following:
\begin{enumerate}[(\text{AS}1)]
\item $A_0=I_{|X|}$.
\item $\sum_{i=0}^n A_i = J_{|X|}$.
\item $A_i^\top =A_i$ for any $i\in\{1,\ldots,n\}$.
\item For any $i,j$, and $k$, there exists an integer $p_{i,j}^k$ such that $A_i A_j=\sum_{k=0}^n p_{i,j}^k A_k$.
\end{enumerate}
We will also refer to $(0,1)$-matrices $A_0,A_1,\ldots,A_n$ satisfying (AS1)-(AS4) as an association scheme. 
The vector space spanned by $A_i$'s forms a commutative algebra, denoted by $\mathcal{A}$ and called the \emph{Bose-Mesner algebra}.
There exists a basis of $\mathcal{A}$ consisting of primitive idempotents, say $E_0=(1/|X|)J_{|X|},E_1,\ldots,E_n$. 
Note that $E_i$ is the projection onto a maximal common eigenspace of $A_0,A_1,\ldots,A_n$. 
Since  $\{A_0,A_1,\ldots,A_n\}$ and $\{E_0,E_1,\ldots,E_n\}$ are two bases in $\mathcal{A}$, there exist the change-of-bases matrices $P=(p_{ij})_{i,j=0}^n$, $Q=(q_{ij})_{i,j=0}^n$ so that
\begin{align*}
A_j=\sum_{i=0}^n p_{ij}E_i,\quad E_j=\frac{1}{|X|}\sum_{i=0}^n q_{ij}A_i.
\end{align*}
The matrices $P=(p_{ij})_{i,j=0}^d$ and $Q=(q_{ij})_{i,j=0}^d$ are the {\it first and second eigenmatrices} respectively. 

Let $(X,\{R_i\}_{i=0}^n)$, $(X,\{R'_i\}_{i=0}^{n'})$ be association schemes. 
If there exists a partition $\Lambda_0:=\{0\},\Lambda_1,\ldots,\Lambda_{n'}$ of $\{0,1,\ldots,n\}$ such that $R'_i=\cup_{j\in\Lambda_i}R_j$ for any $i\in\{0,1,\ldots,n'\}$, then $(X,\{R_i\}_{i=0}^n)$ is said to be {\it fission} of $(X,\{R'_i\}_{i=0}^{n'})$ and $(X,\{R_i'\}_{i=0}^{n'})$ is said to be {\it fusion} of $(X,\{R_i\}_{i=0}^{n})$. 

The scheme is \emph{imprimitive} if,
on viewing the $A_i$ as adjacency matrices
of graphs $G_i$ on vertex set $X$, at least one of the $G_i$, $i \ne 0$, is disconnected. 
In this case, there exists a set $\mathcal{I}$ of indices such that $0$ and such $i$ are elements of $\mathcal{I}$ and $\sum_{j\in\mathcal{I}}A_j=I_p\otimes J_q$ for some $p,q$ with $p>1$ after permuting the vertices suitably. 
Thus the set $X$ is partitioned into $p$ subsets called \emph{fibers}, each of which has size $q$. 
The set $\mathcal{I}$ defines an equivalence relation on $\{0,1,\ldots,d\}$ by $j\sim k$ if and only if $p_{i,j}^k\neq 0$ for some $i\in \mathcal{I}$.  
Let $\mathcal{I}_0=\mathcal{I},\mathcal{I}_1,\ldots,\mathcal{I}_t$ be the equivalence classes on $\{0,1,\ldots,d\}$ by $\sim$. 
Then by \cite[Theorem~9.4]{BI} there exist $(0,1)$-matrices $\overline{A}_j$ ($0\leq j\leq t$) such that 
\begin{align*}
\sum_{i\in \mathcal{I}_j}A_i=\overline{A}_j\otimes J_q,
\end{align*}
and the matrices $\overline{A}_j$ ($0\leq j\leq t$) define an association scheme on the set of fibers. 
This is called the \emph{quotient association scheme} with respect to $\mathcal{I}$.

Let $W$ be a BGW$(v,k,\lambda;\{1,-1\})$.  Write $W=W_1-W_2$ where $W_1,W_2$ are disjoint $(0,1)$-matrices. 
Let $P=\begin{bmatrix}0 & 1 \\ 1 & 0\end{bmatrix}$. 
Define the adjacency matrices as follows:
\begin{align}
A_i&=\begin{bmatrix}
P^{i-1}\otimes I_v&0\\
0& P^{i-1}\otimes I_v
\end{bmatrix}\quad (i \in\{0,1\}) ,\label{eq:a1}\\
A_{2}&=\begin{bmatrix}
J_2 \otimes (J_v-I_v)&0\\
0&J_2 \otimes (J_v-I_v)
\end{bmatrix},\label{eq:a2}\\
A_{3}&=\begin{bmatrix}
0&I_2\otimes W_1+P\otimes W_2\\
I_2\otimes W_1^\top+P\otimes W_2^\top&0
\end{bmatrix},\nonumber\\
A_{4}&=\begin{bmatrix}
0&I_2\otimes W_2+P\otimes W_1\\
I_2\otimes W_2^\top+P\otimes W_1^\top&0
\end{bmatrix},\nonumber\\
A_{5}&=\begin{bmatrix}
0&J_2\otimes(J_v-W_1-W_2)\\
J_2\otimes(J_v-W_1^\top-W_2^\top)&0
\end{bmatrix}.\nonumber
\end{align}

Note that when the weight $k$ is equal to the order $v$, namely $W$ is a Hadamard matrix, the matrix $A_{5}=0$ and this case is dealt in \cite[Section~1.8]{BCN} as Hadamard graphs.

\begin{theorem}\label{thm:as}
Let $W$ be a balanced generalized weighing matrix BGW$(v,k,\lambda)$ over $\{1,-1\}$.
If $v>k$, then $\{A_i\}_{i=0}^5$ is an association scheme with the eigenmatrices $P,Q$ given by 
\begin{align*}
P&=\left[
\begin{array}{cccccc}
 1 & 1 & 2 (v-1) & k & k & 2 (v-k) \\
 1 & -1 & 0 & \sqrt{k} & -\sqrt{k} & 0 \\
 1 & -1 & 0 & -\sqrt{k} & \sqrt{k} & 0 \\
 1 & 1 & 2 (v-1) & -k & -k & -2(v-k) \\
 1 & 1 & -2 & -\sqrt{\frac{(v-k) k}{v-1}} & -\sqrt{\frac{(v-k) k}{v-1}} & 2 \sqrt{\frac{(v-k) k}{v-1}} \\
 1 & 1 & -2 & \sqrt{\frac{(v-k) k}{v-1}} & \sqrt{\frac{(v-k) k}{v-1}} & -2 \sqrt{\frac{(v-k) k}{v-1}} \\
\end{array}
\right],\displaybreak[0]\\
Q&=\left[
\begin{array}{cccccc}
 1 & v & v & 1 & v-1 & v-1 \\
 1 & -v & -v & 1 & v-1 & v-1 \\
 1 & 0 & 0 & 1 & -1 & -1 \\
 1 & \frac{v}{\sqrt{k}} & -\frac{v}{\sqrt{k}} & -1 & -\sqrt{\frac{(v-1)(v-k)}{k}} & \sqrt{\frac{(v-1)(v-k)}{k}} \\
 1 & -\frac{v}{\sqrt{k}} & \frac{v}{\sqrt{k}} & -1 & -\sqrt{\frac{(v-1)(v-k)}{k}} & \sqrt{\frac{(v-1)(v-k)}{k}} \\
 1 & 0 & 0 & -1 & \sqrt{\frac{(v-1)k}{v-k}} & -\sqrt{\frac{(v-1)k}{v-k}} \\
\end{array}
\right].
\end{align*}
\end{theorem}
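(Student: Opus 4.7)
The plan is to verify axioms (AS1)--(AS4) and then read off the eigenmatrices by simultaneously diagonalizing the $A_i$ on six common eigenspaces.

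Axioms (AS1)--(AS3) are immediate from the block-tensor form: $A_0 = I_{4v}$, every $A_i$ is symmetric, and one checks directly that $A_0 + A_1 + A_2 = I_2 \otimes J_{2v}$ together with $A_3 + A_4 + A_5 = P \otimes J_{2v}$, whose sum is $J_{4v}$.

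For (AS4) and for the eigenmatrices, I would exploit the threefold tensor decomposition $\mathbb{C}^{4v} \cong \mathbb{C}^2_{\mathrm{out}} \otimes \mathbb{C}^2_{\mathrm{in}} \otimes \mathbb{C}^v$, in which $I_2$, $J_2$ and $P$ share the eigenbasis $\{(1,1),(1,-1)\}$; write $T_\pm$ for the $\pm 1$-eigenspaces of $P$ in the inner factor. A short computation shows that on $T_+$ both $M := I_2 \otimes W_1 + P \otimes W_2$ and $N := I_2 \otimes W_2 + P \otimes W_1$ reduce to $\bar W := W_1 + W_2$, while on $T_-$ they reduce to $W$ and $-W$ respectively; and $L := J_2 \otimes (J_v - \bar W)$ becomes $2(J_v - \bar W)$ on $T_+$ and $0$ on $T_-$. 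Using $\bar W \bar W^\top = (k-\lambda)I + \lambda J_v$ (since $\bar W$ is the incidence matrix of a symmetric $(v,k,\lambda)$-design) and $W W^\top = kI$, one then exhibits six common eigenspaces of $A_0,\ldots,A_5$: two one-dimensional spaces inside $T_+ \otimes \langle \j \rangle$ separated by the outer $\pm$ sign; two $(v-1)$-dimensional spaces inside $T_+ \otimes \j^\perp$ separated by the singular values $\pm\sqrt{k-\lambda}$ of $\bar W$ on $\j^\perp$ (using $A_5 = -2 A_3$ on this piece because $J_v$ vanishes on $\j^\perp$); and two $v$-dimensional spaces on the $T_-$ part separated by the singular values $\pm\sqrt{k}$ of $W$. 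Substituting $k - \lambda = (v - k)k/(v-1)$ via $\lambda(v-1) = k(k-1)$ then matches the six eigenvalue tuples to the six rows of the stated $P$.

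Since the six tuples are pairwise distinct when $v > k \geq 1$, these are maximal common eigenspaces and $\mathrm{span}\{A_0,\ldots,A_5\}$ is a commutative matrix algebra of dimension exactly six. This forces (AS4): each product $A_i A_j$ lies in this span, and because the supports of $A_0,\ldots,A_5$ partition $X \times X$ while $A_i A_j$ is a nonnegative integer matrix, the expansion coefficients must be nonnegative integer intersection numbers $p_{ij}^k$. The second eigenmatrix $Q$ then follows from the standard orthogonality $PQ = 4v \cdot I$. The principal technical point is that $\bar W$ need not be symmetric when the underlying design is not self-dual, which forces the $T_+ \otimes \j^\perp$ eigenvectors to be built from singular vectors of $\bar W$ rather than honest eigenvectors; the singular values, however, are determined entirely by $\bar W \bar W^\top$, so the bookkeeping remains uniform.
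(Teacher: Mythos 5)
Your proposal is correct, and it reaches the conclusion by a genuinely different route than the paper. The paper derives the block identities for $W_1,W_2$ from Lemma~\ref{lem:bgw}, asserts closure of $\{A_i\}$ under multiplication from those identities, then computes the intersection matrix $B_3$ and invokes \cite[Theorem~4.1(ii)]{BI} to extract $P$ and $Q$ from the intersection algebra. You instead diagonalize simultaneously: the splitting $\mathbb{C}^{4v}\cong\mathbb{C}^2_{\mathrm{out}}\otimes\mathbb{C}^2_{\mathrm{in}}\otimes\mathbb{C}^v$, the inner $T_\pm$ reduction sending $A_3,A_4$ to $\overline{W}=W_1+W_2$ on $T_+$ and to $\pm W$ on $T_-$, and the further $\langle\j\rangle\oplus\j^\perp$ split on the $T_+$ part, do produce six mutually complementary common invariant subspaces of dimensions $1,1,v,v,v-1,v-1$ on which each $A_i$ is scalar (your use of singular vectors of $\overline{W}$ on $\j^\perp$, and the identities $A_4=A_3$, $A_5=-2A_3$ there, are exactly right, using $\overline{W}\,\overline{W}^\top=(k-\lambda)I_v+\lambda J_v$ and $k-\lambda=k(v-k)/(v-1)$; note the constancy of column sums of $\overline{W}$, needed to preserve $\j^\perp$, comes from $\overline{W}$ being a symmetric design incidence matrix). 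Your dimension-count argument for (AS4) is sound: the six $A_i$ have disjoint nonzero supports (here $v>k$ guarantees $A_5\neq 0$), so they span the full six-dimensional algebra of matrices constant on the six subspaces, which is closed under multiplication, and integrality of the $p_{ij}^k$ follows by evaluating entries on the supports. What each approach buys: the paper's is shorter once the Bannai--Ito machinery is granted and organizes everything through a single intersection matrix, while yours is more self-contained, proves closure without computing all intersection numbers, exhibits the eigenspaces and their dimensions explicitly (so the multiplicities $1,v,v,1,v-1,v-1$ in the first row of $Q$ are visible at once), with $Q$ then pinned down by $PQ=4vI$ --- the only step you leave as a routine verification, comparable in weight to the paper's appeal to \cite[Theorem~4.1(ii)]{BI}.
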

\begin{proof}
By Lemma~\ref{lem:bgw}, we have 
\begin{align*}
W_1W_1^\top+W_2W_2^\top&=W_1^\top W_1+W_2^\top W_2=\frac{1}{2}((2k-\lambda)I_v+\lambda J_v), \\
W_1W_2^\top+W_2W_1^\top&=W_1^\top W_2+W_2^\top W_1=\frac{1}{2}\lambda(J_v-I_v).
\end{align*}
It follows readily from the equations above that $A_i$'s form an association scheme. 

It is straightforward to see that the intersection numbers $B_3$ is given by 
$$B_3=\left(
\begin{array}{cccccc}
 0 & 0 & 0 & 1 & 0 & 0 \\
 0 & 0 & 0 & 0 & 1 & 0 \\
 0 & 0 & 0 & k-1 & k-1 & k \\
 k & 0 & \frac{(k-1) k}{2 (v-1)} & 0 & 0 & 0 \\
 0 & k & \frac{(k-1) k}{2 (v-1)} & 0 & 0 & 0 \\
 0 & 0 & \frac{(v-k) k}{v-1} & 0 & 0 & 0 \\
\end{array}
\right),$$
and then apply \cite[Theorem~4.1 (ii)]{BI} to this case to obtain the desired eigenmatrices. 
\end{proof}

\begin{theorem}
If there exists an association scheme with the eigenmatrices given in Theorem~\ref{thm:as}, then there exists a balanced  weighing matrix with parameters $(v,k,\lambda)$ where $\lambda=\frac{k(k-1)}{v-1}$.  
\end{theorem}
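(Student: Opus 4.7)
My strategy is to extract the balanced weighing matrix $W$ directly from the association scheme, in effect reversing the construction of Theorem~\ref{thm:as}. Reading the first row of $P$ gives valencies $(1,1,2(v-1),k,k,2(v-k))$ summing to $|X|=4v$. Evaluating $A_0+A_1+A_2$ on each $E_i$ gives eigenvalues $(2v,0,0,2v,0,0)$, so $A_0+A_1+A_2=I_2\otimes J_{2v}$; this partitions $X$ into two fibers of size $2v$ and places the support of each of $A_3,A_4,A_5$ in the between-fiber block. Moreover, $A_1=E_0-E_1-E_2+E_3+E_4+E_5$ satisfies $A_1^2=I$, so $R_1$ is a fixed-point-free perfect matching compatible with the fiber partition. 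I then fix a labeling of each fiber by $[2]\times[v]$ so that $A_1$ acts as $P\otimes I_v$, and write each $A_i$ in block form accordingly, with off-diagonal $2v\times 2v$ blocks $B_i$ for $i=3,4,5$.

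The pivotal algebraic relation is $A_3A_1=A_4$, immediate from a short eigenbasis calculation using the columns of $P$ corresponding to $A_1,A_3,A_4$. Translated to block form, this identity together with its transpose $A_1A_3=A_4$ gives the entry-level antisymmetries
\[
B_4((s_1,p),(s_2,q))=B_3((s_1,p),(\bar s_2,q))=B_3((\bar s_1,p),(s_2,q)),
\]
where the bar denotes the sign-swap induced by $A_1$. These force the tensor factorizations $B_3-B_4=(I_2-P)\otimes W$ and $B_3+B_4=J_2\otimes\bar{W}$, where $W=(W_{pq})$ is the $\{-1,0,1\}$-matrix defined by $W_{pq}:=B_3((1,p),(1,q))-B_4((1,p),(1,q))$ and $\bar{W}$ is its $(0,1)$-support.

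It remains to verify the weighing and balance conditions for $W$. Expanding $(A_3-A_4)^2$ in the eigenbasis and converting back via $Q$ yields $(A_3-A_4)^2=4k(E_1+E_2)=2k(A_0-A_1)$; the corresponding block identity $(I_2-P)^2\otimes WW^\top=2k(I_2-P)\otimes I_v$ cancels to $WW^\top=kI_v$. Similarly $(A_3+A_4)^2$ reduces to $2kA_0+2kA_1+2\lambda A_2$ with $\lambda=k(k-1)/(v-1)$, whose block counterpart $J_2\otimes\bar{W}\bar{W}^\top=J_2\otimes(kI_v+\lambda(J_v-I_v))$ gives $\bar{W}\bar{W}^\top=kI_v+\lambda(J_v-I_v)$, so $\bar{W}$ is the incidence matrix of a symmetric $(v,k,\lambda)$-design. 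This shows $W$ is a balanced weighing matrix with the stated parameters. The main obstacle is the tensor factorization in the previous paragraph: a priori the $(0,1)$-blocks $B_3,B_4$ need not respect the chosen labeling $[2]\times[v]$ of each fiber, and it is precisely the algebraic identities $A_3A_1=A_1A_3=A_4$ that force the product structure.
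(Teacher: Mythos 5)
Your proposal is correct and follows essentially the same route as the paper's proof: you identify the structure of $A_1$ and $A_2$ (hence the fibers and the labeling) from their eigenvalues, use the Bose--Mesner identity $A_1A_3=A_3A_1=A_4$ to force the internal $I_2/P$-tensor (equivalently, $W_1,W_2$ block) structure of $A_3,A_4$, and then extract $WW^\top=kI_v$ and $\overline{W}\,\overline{W}^\top=kI_v+\lambda(J_v-I_v)$ from the relations $(A_3-A_4)^2=2k(A_0-A_1)$ and $(A_3+A_4)^2=2k(A_0+A_1)+2\lambda A_2$. Apart from cosmetic differences in presentation (tensor-factorization language versus the paper's explicit $4\times4$ block matrices), the argument coincides with the paper's.
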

\begin{proof}
Consider the matrix $A_1+A_2$. Its eigenvalues are $2v-1,-1$ with multiplicities $2,4v-2$ respectively. Then $A_1+A_2$ is the adjacency matrix of a two copies of $K_{2v}$, the complete graph on $2v$ vertices. 
Next by the eigenvalues of $A_1$, $A_1$ is the adjacency matrix of a graph of disjoint $2v$ edges.  
Thus we may assume that $A_1,A_2$ are of the form in \eqref{eq:a1}, \eqref{eq:a2}.  
  
Write $$A_3=\begin{pmatrix}  0 & 0 & X_1 & X_2 \\
 0 & 0 & X_3 & X_4 \\
 X_1^\top & X_3^\top & 0 & 0 \\
 X_2^\top & X_4^\top & 0 & 0
\end{pmatrix},
A_4=\begin{pmatrix} 0 & 0 & Y_1 & Y_2 \\
 0 & 0 & Y_3 & Y_4 \\
 Y_1^\top & Y_3^\top & 0 & 0 \\
 Y_2^\top & Y_4^\top & 0 & 0 
\end{pmatrix}.$$ 
By the given eigenmartrices, we find that $A_1A_3=A_4$, which yields 
$$
\begin{pmatrix} 0 & 0 & X_3 & X_4 \\
 0 & 0 & X_1 & X_2 \\
 X_2^\top & X_4^\top & 0 & 0 \\
 X_1^\top & X_3^\top & 0 & 0 
\end{pmatrix}=\begin{pmatrix} 0 & 0 & Y_1 & Y_2 \\
 0 & 0 & Y_3 & Y_4 \\
 Y_1^\top & Y_3^\top & 0 & 0 \\
 Y_2^\top & Y_4^\top & 0 & 0 
\end{pmatrix}.
$$
Thus, there exist some $(0,1)$-matrices $W_1,W_2$ such that 
$$A_3=\begin{pmatrix}  0 & 0 & W_1 & W_2 \\
 0 & 0 & W_2 & W_1 \\
 W_1^\top & W_2^\top & 0 & 0 \\
 W_2^\top & W_1^\top & 0 & 0
\end{pmatrix},
A_4=\begin{pmatrix} 0 & 0 & W_2 & W_1 \\
 0 & 0 & W_1 & W_2 \\
 W_2^\top & W_1^\top & 0 & 0 \\
 W_1^\top & W_2^\top & 0 & 0 
\end{pmatrix}.$$ 
Again, by the eigenmatrices, we have that 
\begin{align*}
A_3^2&=kA_0+\frac{(k-1) k}{2 (v-1)}A_2, \\
A_3A_4&=A_4A_3=kA_1+\frac{(k-1) k}{2 (v-1)}A_2, \\
A_4^2&=kA_0+\frac{(k-1) k}{2 (v-1)}A_2.
\end{align*}
From these it follows that $(A_3-A_4)^2=2k(A_0-A_1), (A_3+A_4)^2=2k(A_0+A_1)+\frac{2(k-1) k}{v-1}A_2$, that is,  
\begin{align*}
(W_1-W_2)(W_1^\top-W_2^\top)&=kI_v,\\ 
(W_1+W_2)(W_1^\top+W_2^\top)&=(k-\lambda)I_v+\lambda J_v. 
\end{align*}
Hence, $W_1-W_2$ is a balanced generalized weighing matrix with the parameters $(v,k,\lambda)$.  
\end{proof}

\begin{remark}
Let $\{A_i\}_{i=0}^5$ be an association scheme with the same eigenmatrices as in Theorem~\ref{thm:as}.  Then it is easy to see that the quotient scheme with respect to the equivalence relation $R_0\cup R_1$, which is a three-class association scheme in \cite[Theorem~1.6.1]{BCN}, has the adjacency matrices 
\begin{align*}
\begin{bmatrix}
I_v & 0 \\ 
0 & I_v
\end{bmatrix},\begin{bmatrix}
J_v-I_v & 0 \\ 
0 & J_v-I_v
\end{bmatrix}, 
\begin{bmatrix}
0 & N \\ 
N^\top & 0
\end{bmatrix}, 
\begin{bmatrix}
0 & J_v-N \\ 
J_v-N^\top & 0
\end{bmatrix}, 
\end{align*}
where $N=W_1+W_2$ is the incidence matrix of a symmetric $(v,k,\lambda)$ design.  
\end{remark}

\section*{Acknowledgments.}
The authors would like to thank the referees for their careful reading and pointing out errors in earlier version.  
Hadi Kharaghani is supported by the Natural Sciences and
Engineering  Research Council of Canada (NSERC).  Sho Suda is supported by JSPS KAKENHI Grant Number 18K03395.

\end{document}